\lstdefinelanguage{Macaulay2}{
comment=[l]{--},
alsoletter={'},
alsoother={_},
}
\itshape\color{gray},
\theoremstyle{plain}
\newtheorem{Theorem}{Theorem}[section]
\newtheorem{prop}[Theorem]{Proposition}
\newtheorem{conj}[Theorem]{Conjecture}
\newtheorem{thm}[Theorem]{Theorem}
\newtheorem{cor}[Theorem]{Corollary}
\newtheorem{lem}[Theorem]{Lemma}
\newtheorem*{Ackn}{Acknowledgments}
\theoremstyle{definition}
\newtheorem{ex}[Theorem]{Example}
\newtheorem{dfn}[Theorem]{Definition}
\theoremstyle{remark}
\DeclareMathOperator{\Q}{\mathbb Q}
\DeclareMathOperator{\Co}{\mathbb C}
\DeclareMathOperator{\aut}{Aut}
\DeclareMathOperator{\h}{H}
\DeclareMathOperator{\rk}{rank}
\DeclareMathOperator{\pr}{\mathbb{P}}
\DeclareMathOperator{\ob}{\mathcal{O}}
\newcommand*{\rom}[1]{\expandafter\@slowromancap\romannumeral #1@}
\begin{document}
\author{Abugaliev Renat}

\title{Characteristic foliation on vertical hypersurfaces in holomorphic symplectic manifolds with Lagrangian Fibration}
\maketitle
\begin{center}\small
    Laboratoire de Mathématiques d’Orsay, Univ. Paris-Sud, CNRS, \\Université Paris-Saclay, 91405 Orsay, France.\\
E-mail address: \textbf{renat.abugaliev@u-psud.fr}

\end{center}

\begin{abstract}
Let $Y$ be a smooth hypersurface in a projective irreducible holomorphic symplectic manifold $X$ of dimension $2n$. The characteristic foliation $F$  is the kernel of the symplectic form restricted to $Y$.  Assume that $X$ is equipped with a Lagrangian fibration $\pi:X\to B$ and $Y=\pi^{-1}D$, where $D$ is a hypersurface in $B$. It is easy to see that the leaves of $F$ are contained in the fibers of $\pi$. We prove that a very general leaf is Zariski dense in a fiber of $\pi$.
\end{abstract}
\section{Introduction}
First, we recall the definition of our main object of study, that is an irreducible holomorphic symplectic manifold.
\begin{dfn}
Let $X$ be a smooth projective variety over $\Co$. It is an irreducible holomorphic symplectic (IHS) if it satisfies the following properties:
\begin{itemize}
    \item $\h^0(X,\Omega^2_X)=\Co \sigma$, where $\sigma$ is a holomorphic symplectic form (at any point of $X$);
    \item $\h^1(X,\ob_X)=0$;
    \item $\pi_1(X)=0$.
\end{itemize}
\end{dfn}

 Note that a holomorphic symplectic form $\sigma$ on a smooth variety $X$ induces an isomorphism between the vector bundles $T_X$ and $\Omega_X$. Indeed, one can map a vector field $v$ to the differential form $\sigma(v,*)$.
\begin{dfn}Let $Y$ be a hypersurface in $X$ and $Y^{sm}$ smooth locus of $Y$. Consider the restriction of $T_X$ to $Y$. The restriction of a symplectic form to any codimension one subspace has rank $2n-2$ i.e. has one-dimensional kernel. The orthogonal complement of the bundle $T_{Y^{sm}}$ in  $T_{X|Y^{sm}}$ is a line subbundle $F$ of $T_{Y^{sm}}\subset T_{X} | _{Y^{sm}}$. We call the rank one subbundle $F\subset T_{Y^{sm}}$ the {\bf characteristic foliation}.
\end{dfn}
Assume $Y$ is smooth. Since $Y=Y^{sm}$, $F$ is a subbundle of $T_Y$. Furthermore, $F$ is isomorphic to the conormal bundle $\mathcal{N}_{Y/X}$ (which is isomorphic to $\ob_Y(-Y)$ by the adjunction formula). Indeed, consider the following short exact sequence: 
$$ 0\to T_Y \to T_X|Y \to \ob_Y(Y) \to 0.$$
Applying the isomorphism $T_X\cong \Omega_X$, we obtain that $F\cong\ob_Y(-Y)$.\\
  Our first question is whether it is algebraically integrable. Jun-Muk Hwang and Eckart Viehweg showed in \cite{HV} that if $Y$ is of general type, then $F$ is not algebraically integrable.  In paper \cite{AC} Ekaterina Amerik and Frédéric Campana  completed this result to the following.
\begin{thm}\cite[Theorem~1.3]{AC} \label{ac}
 Let $Y$ be a smooth hypersurface in an irreducible holomorphic symplectic manifold $X$ of dimension at least $4$. Then  the characteristic foliation on $Y$ is algebraically integrable if and only if $Y$ is uniruled, i.e. covered by rational curves.
\end{thm}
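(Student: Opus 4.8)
The starting point is the symplectic description of the characteristic foliation. Using the isomorphism $T_X\cong\Omega^1_X$ induced by $\sigma$, the line field $F$ is spanned by the Hamiltonian vector field $\sigma^{-1}(df)$ of a local equation $f$ of $Y$, and one checks that this identifies $F$ with the conormal bundle, $F\cong N^*_{Y/X}=\mathcal O_Y(-Y)$. Since $X$ is holomorphic symplectic we have $K_X\cong\mathcal O_X$, so adjunction gives $K_Y\cong N_{Y/X}$; hence the canonical class of the foliation is $K_F=F^{*}\cong N_{Y/X}\cong K_Y$, and equivalently $\det N_F\cong\mathcal O_Y$ where $N_F=T_Y/F$. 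In other words $F$ is a transversely symplectic rank-one foliation: $\sigma_Y:=\sigma|_{T_Y}$ has constant rank $2n-2$ with kernel exactly $F$. On a general smooth leaf $C$ one has $T_C=F|_C$ and $N_{C/Y}=N_F|_C$ with $\det N_{C/Y}\cong\mathcal O_C$, and $N_{Y/X}|_C\cong K_C$; consequently $K_Y\cdot C=\deg\bigl(N_{Y/X}|_C\bigr)=2g(C)-2$. The plan is to run both implications through this identity together with the criterion of Boucksom--Demailly--Paun--Peternell (BDPP): $Y$ is uniruled if and only if $K_Y$ is not pseudo-effective, equivalently $K_Y\cdot\alpha<0$ for some movable curve class $\alpha$.

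For the implication ``uniruled $\Rightarrow$ algebraically integrable'' I would use the maximal rationally connected fibration $\psi\colon Y\to Z$ (defined over a dense open set), whose fibers are positive dimensional when $Y$ is uniruled. Holomorphic two-forms on the total space of a rationally connected fibration are pulled back from the base (vanishing of forms on rationally connected fibers, in the form due to Kebekus--Sola Conde--Toma); hence $\sigma_Y=\psi^{*}\sigma_Z$ and $\iota_v\sigma_Y=0$ for every $v$ tangent to the fibers of $\psi$. Thus the foliation by these fibers is contained in $\ker\sigma_Y=F$, a line field; since the fibers are positive dimensional they coincide generically with the leaves of $F$, and being rationally connected curves they are rational. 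Hence $F$ is algebraically integrable, with rational leaves.

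For the converse ``algebraically integrable $\Rightarrow$ uniruled'', assume the leaves are algebraic and let $\phi\colon Y\to B$ be the family of leaves (defined over a dense open set), with general fibre the leaf $C$ of genus $g$ and $\dim B=2n-2$. Because $\ker\sigma_Y=F$ is exactly the relative tangent sheaf of $\phi$ and $\sigma_Y$ is closed, $\sigma_Y$ descends to a holomorphic symplectic form on $B$; thus $(B,\sigma_B)$ is symplectic of dimension $2(n-1)$, in particular $K_B\equiv 0$. The leaves form a covering, hence movable, family, so by BDPP it suffices to prove $K_Y\cdot C=2g-2<0$, that is $g=0$. I would establish this by ruling out $g\ge 1$. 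Here one exploits the symplectic identification $N_{C/X}\cong\Omega^1_Y|_C$, the normal bundle sequence $0\to N_{C/Y}\to N_{C/X}\to K_C\to 0$ with $\det N_{C/Y}\cong\mathcal O_C$ and $N_{Y/X}|_C\cong K_C$, and the embedding datum $\mathcal O_Y(-Y)=F=T_{Y/B}$. Analyzing the variation of Hodge structure of the rank-$g$ relative Hodge bundle $\phi_*\omega_{Y/B}$ over the symplectic base $B$, together with this embedding datum, should show that a positive-genus, transversely symplectic, algebraically integrable characteristic foliation cannot exist on $Y$: it would produce sub-structures of $\Omega^1_Y$ forcing $K_Y$ to be pseudo-effective while contradicting the constraints imposed by $K_B\equiv0$ and the position of $[Y]$ in the Beauville--Bogomolov--Fujiki lattice of $X$. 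Once $g=0$ is known, BDPP yields that $Y$ is uniruled.

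The main obstacle is precisely this last step: the rationality of the general leaf, i.e. ruling out $g\ge1$. The numerics are delicate, since for $g\ge1$ one has $K_Y\cdot C=2g-2\ge0$; consequently neither Mori's bend-and-break through a fixed point (the relevant Euler characteristics $\chi\bigl(N_{C/X}(-x)\bigr)$ are negative) nor Miyaoka's generic semipositivity of $\Omega^1_Y$ along covering curves (which is \emph{consistent} with $g\ge1$) produces a rational curve on its own. What must be used in an essential way is the combination of involutivity of $F$ with the transverse symplectic structure, encoded in the symplectic leaf space $B$ and the identification $N_{C/X}\cong\Omega^1_Y|_C$, to upgrade the mere existence of an algebraic family of leaves into the negativity $K_Y\cdot C<0$. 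This is the technical heart of \cite{AC}, and is exactly where the holomorphic symplectic hypothesis is indispensable: the foliation studied in the main theorem of the present paper, on $Y=\pi^{-1}D$, is a case where the leaves fail to be algebraic, in accordance with $Y$ being non-uniruled.
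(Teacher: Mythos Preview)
The paper does not contain a proof of this statement. Theorem~\ref{ac} is quoted verbatim from \cite[Theorem~1.3]{AC} as background material in the introduction; the present paper neither reproves it nor sketches an argument for it. There is therefore nothing in the paper to compare your proposal against.

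A few remarks on your sketch itself. Your treatment of the easy direction (uniruled $\Rightarrow$ algebraically integrable) is correct and is in fact essentially the argument the paper gives in the introduction when discussing case~(3) of Conjecture~\ref{conja}: the MRC fibration has fibers on which $\sigma|_Y$ vanishes, hence they are tangent to $F$, hence one-dimensional and rational. For the hard direction you correctly identify the key numerics $K_F\cong K_Y$ and $K_Y\cdot C=2g-2$, and you are honest that the step ruling out $g\ge 1$ is where the real work lies. However, your proposed mechanism---a variation of Hodge structure argument over the symplectic leaf space $B$ combined with the Beauville--Bogomolov lattice---is not how Amerik and Campana actually proceed, and as written it is too vague to constitute a proof. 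Their argument in \cite{AC} goes through orbifold base techniques and semipositivity of direct image sheaves (building on Hwang--Viehweg \cite{HV}, which the paper also cites), not through a VHS analysis of $\phi_*\omega_{Y/B}$. If you want to fill in this step you should consult \cite{AC} directly rather than the present paper.
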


The next step is to ask what could be the dimension of the Zariski closure of a generic leaf of $F$. In dimension $4$ the situation is understood thanks to Theorem \ref{ag}.
\begin{thm}[\cite{AG}]\label{ag}
Let $X$ be an irreducible holomorphic symplectic fourfold and let $Y$ be an irreducible smooth hypersurface in $X$. Suppose that the characteristic foliation $F$ on $Y$ is not algebraically integrable, but there exists a meromorphic fibration on $p:Y \dasharrow C$  by surfaces invariant under $F$ (see Definition \ref{inv}). Then  there exists a rational Lagrangian fibration $X \dasharrow B$ extending $p$. In particular, the Zariski closure of a generic leaf is an abelian surface.
\end{thm}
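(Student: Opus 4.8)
The plan is to show that the invariant surfaces, i.e.\ the general fibres $S$ of $p$, are Lagrangian abelian surfaces, and then to promote the $1$-parameter family $\{S_c\}_{c\in C}$ sweeping out $Y$ to a $2$-parameter covering family of Lagrangian tori in $X$, which one recognises as the fibres of a rational Lagrangian fibration. First I would record the linear-algebra fact that an $F$-invariant surface $S\subset Y$ is Lagrangian. Invariance means $F|_S\subset TS$, and since $F=\ker(\sigma|_Y)$ we get $\sigma(F|_S,TS)=0$, so $F|_S\subset\ker(\sigma|_S)$. A $2$-form on a surface with nontrivial kernel has even rank $0$, hence $\sigma|_S=0$ and $S$ is an isotropic surface in the fourfold $X$, i.e.\ Lagrangian.

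Next I would identify the general fibre. For general $c$ the surface $S=S_c$ is smooth with $N_{S/Y}\cong\ob_S$ (general fibre of a fibration over a curve), and being Lagrangian it satisfies $N_{S/X}\cong\Omega^1_S$ via $\sigma$. The normal bundle sequence of $S\subset Y\subset X$ then reads
\[
0\longrightarrow \ob_S\longrightarrow \Omega^1_S\longrightarrow N_{Y/X}|_S\longrightarrow 0,
\]
a sequence of bundles, so the image of $1$ is a nowhere-vanishing holomorphic $1$-form $\omega$ on $S$. A direct check with $\sigma$ shows $\ker\omega=F|_S$, so the characteristic foliation restricted to $S$ is the foliation defined by $\omega$; it is nonsingular with trivial normal bundle, whence $c_2(S)=0$. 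Since $S$ is Kähler, carries a nowhere-zero $1$-form and has no rational curve (a nonconstant $\pr^1$ would be an algebraic leaf of $\omega$, excluded as $F$ is not algebraically integrable), it is minimal, and the Enriques--Kodaira classification of minimal Kähler surfaces with $c_2=0$ leaves only the complex torus: ruled surfaces over an elliptic curve and (bi)elliptic surfaces have a ruling or Albanese fibration whose fibres are algebraic leaves of the unique $1$-form, again excluded. Hence $S$ is an abelian surface, $\omega$ is translation invariant, and $\ker\omega$ is a linear foliation whose leaves are dense precisely because $F$ is not algebraically integrable. This already yields the last assertion: the Zariski closure of a general leaf is the abelian surface $S$.

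It remains to extend $\{S_c\}$ to a Lagrangian fibration of $X$. By Voisin's theorem the deformations of the smooth Lagrangian torus $S$ inside $X$ are unobstructed and parametrised by $H^0(S,\Omega^1_S)\cong\Co^2$, so the component of the Hilbert scheme through $[S]$ is a smooth surface $B$. The deformations of $S$ contained in $Y$ are governed by $H^0(N_{S/Y})=H^0(\ob_S)\cong\Co$, i.e.\ they form exactly the $1$-parameter family $\{S_c\}$; the normal bundle sequence above shows the remaining direction is transverse to $Y$, so the universal family dominates $X$ and we obtain a covering family of Lagrangian tori on the projective symplectic fourfold $X$. By the theorem of Hwang and Weiss on webs of Lagrangian tori, such a covering family is a fibration -- through a general point passes a unique member -- defining an almost holomorphic Lagrangian fibration, which by Matsushita's theory extends to a rational Lagrangian fibration $\pi\colon X\rat B$ with $B$ bimeromorphic to $\pr^2$. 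By construction the members lying in $Y$ are exactly the fibres of $p$, so $Y=\pi^{-1}(C)$ for a curve $C\subset B$ and $\pi|_Y=p$; thus $\pi$ extends $p$.

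The main obstacle is the last paragraph: one must guarantee that the Lagrangian deformations genuinely move $S$ off $Y$ (so the family is $2$-dimensional and covering rather than confined to $Y$) and that the resulting covering family is an honest fibration. The first point rests on the unobstructedness and dimension count for Lagrangian deformations, the second on the disjointness of general members of a covering family of Lagrangian tori, i.e.\ on the Hwang--Weiss web theorem together with Matsushita's description of the base. Verifying smoothness of the general fibre and the triviality $N_{S/Y}\cong\ob_S$ after resolving the indeterminacy of $p$ are the routine points to be checked along the way.
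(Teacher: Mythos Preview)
The theorem you are attempting to prove is not proved in this paper. It is quoted as a background result with the attribution \cite{AG}; the present paper states it in the Introduction to motivate Conjecture~\ref{conja} and then moves on. There is therefore no ``paper's own proof'' to compare your proposal against.

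That said, your outline is broadly in the spirit of the argument in the cited source: first show the $F$-invariant surfaces are Lagrangian, then identify them as abelian surfaces via the normal bundle sequence and a classification argument, and finally deform them out of $Y$ using unobstructedness of Lagrangian tori (Voisin) together with Hwang--Weiss/Matsushita to obtain the rational Lagrangian fibration. The places you flag as delicate---that the deformations genuinely move $S$ off $Y$, and that the covering family is an honest fibration---are indeed the substantive points, and the classification step ruling out bielliptic and ruled surfaces deserves a bit more care (you need to be sure \emph{every} nowhere-vanishing $1$-form on those surfaces has algebraic leaves, not just some). But for the purposes of this paper, no proof is required here; the statement is simply imported from \cite{AG}.
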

This leads to the following conjecture. 
\begin{conj}[Campana]\label{conja}
Let $Y$ be a smooth hypersurface in an irreducible holomorphic symplectic manifold $X$ and let $q$ be the Beauville-Bogomolov form on $\h^2(X,\Q)$. Then:
\begin{enumerate}
\item If $q(Y,Y)>0$, a generic leaf of $F$ is Zariski dense in $Y$;
 \item If $q(Y,Y)=0$, the Zariski closure of a generic leaf of $F$ is an abelian variety of dimension $n$;
 \item If $q(Y,Y)<0$, $F$ is algebraically integrable and $Y$ is uniruled.
 \end{enumerate}
 \end{conj}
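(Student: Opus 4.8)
The plan is to translate the statement into a question about one-parameter subgroups of the abelian fibers of $\pi$ and then to exclude the ``non-dense'' case. Write $\Delta\subset\pr^n$ for the discriminant of $\pi$; since $Y$ is smooth, $D$ is smooth away from $\Delta$, no component of $D$ lies in $\Delta$, and $D^\circ:=D\setminus\Delta$ is a dense open subset of $D$ over which $X_b:=\pi^{-1}(b)$ is an abelian variety of dimension $n$ contained in $Y$. Because the fibers of $\pi$ are Lagrangian, $\sigma$ induces an isomorphism $T_{X/\pr^n}\xrightarrow{\sim}\pi^*\Omega^1_{\pr^n}$; restricting to $X_b$ identifies the (trivial) tangent bundle $T_{X_b}$ with the constant bundle $\Omega^1_{\pr^n,b}\otimes\mathcal{O}_{X_b}$, hence identifies $\mathrm{Lie}(X_b)$ with the cotangent space $\Omega^1_{\pr^n,b}$. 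A direct computation with $T_yY=(d\pi_y)^{-1}(T_bD)$ shows that under this identification $F|_{X_b}$ is the \emph{constant} line field equal to the conormal line $\ell_b:=N^*_{D/\pr^n,b}\subset\Omega^1_{\pr^n,b}$, i.e. to the value at $b$ of the Gauss map of $D$. Hence the leaf of $F$ through any $y\in X_b$ is a translate of the image of the one-parameter subgroup of $X_b$ with Lie algebra $\ell_b$, and it is Zariski dense in $X_b$ precisely when $\ell_b$ is contained in the Lie algebra of no proper abelian subvariety of $X_b$.

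I would first record that $F$ is not algebraically integrable, so that its leaves are genuinely transcendental curves. Indeed, by adjunction $K_Y=\mathcal{O}_X(Y)|_Y=\pi^*\mathcal{O}_{\pr^n}(\deg D)|_Y=(\pi|_Y)^*\bigl(\mathcal{O}_{\pr^n}(\deg D)|_D\bigr)$ is the pullback of an ample line bundle, so $\kappa(Y)\ge 0$, $Y$ is not uniruled, and Theorem~\ref{ac} applies (for $n=1$ the leaves are the elliptic fibers and there is nothing to prove). Now suppose for contradiction that a general leaf of $F$ is not Zariski dense in its fiber. By the first paragraph, for $b$ in a dense subset of $D^\circ$ there is a proper abelian subvariety $A_b\subsetneq X_b$ with $\ell_b\subset\mathrm{Lie}(A_b)$; choosing $A_b$ minimal makes the Zariski closure of every leaf in $X_b$ a translate of $A_b$, and by semicontinuity $d:=\dim A_b$ is constant and $<n$ on a dense open $U_0\subseteq D^\circ$. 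On a small ball $U\subset U_0$ a Baire-category argument — using that $\ell_b$ varies holomorphically while the subspaces of $\mathrm{Lie}(X_b)$ rational for the lattice of $X_b$ form a countable set — produces a single sub-variation of Hodge structure $\mathbb{W}\subset (R^1\pi_*\Q)|_U$ with fibers $\h_1(A_b,\Q)$, of rank $2d<2n$.

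The heart of the argument, and the step I expect to be the main obstacle, is to derive a contradiction from this proper sub-variation. There are two complementary routes. Hodge-theoretically, one differentiates the inclusion $\ell_b\subset\mathrm{Lie}(A_b)$ along $D^\circ$ using the Gauss–Manin connection and the second fundamental form of $R^1\pi_*\Q$ (which for a Lagrangian fibration carries the extra Donagi–Markman symmetry): when the Gauss map of $D$ is non-degenerate — in particular when $D$ is not a hyperplane — the derivatives $\nabla_v\ell_b$ span too much of $\mathrm{Lie}(X_b)$ for $\mathrm{Lie}(A_b)$ to be proper, while if $D$ is a hyperplane the Gauss map is constant and one instead propagates $\mathbb{W}$ to a proper sub-variation over all of $D^\circ$, contradicting irreducibility of the monodromy of $R^1\pi_*\Q$ along $D^\circ$. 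Geometrically, in the spirit of \cite{HV}, \cite{AC} and the proof of Theorem~\ref{ag}, the translates of the $A_b$ define an involutive isotropic rank-$d$ foliation $\mathcal{G}\supset F$ on a dense open of $Y$; one would extend $\mathcal{G}$ (or its $\sigma$-orthogonal) to a foliation on $X$, show it defines a rational fibration, and argue this fibration must be Lagrangian, which together with $Y=\pi^{-1}D$ forces $d=n$. In both routes the delicate point is uniformity in $D$: nothing constrains $\deg D$, $D$ may be a hyperplane, and $D$ may meet $\Delta$, so one must guarantee that the required irreducibility of the monodromy — equivalently the simplicity of the variation of Hodge structures of a Lagrangian fibration — survives restriction to the possibly very special subvariety $D^\circ$. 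Once this is secured the contradiction is immediate and the theorem follows.
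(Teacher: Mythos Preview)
First, a clarification: the displayed statement is a \emph{conjecture}, and the paper does not prove it in full. Part~(3) is dispatched in a paragraph of the introduction; part~(1) is left open; and of part~(2) the paper proves only the special case where $X$ admits a Lagrangian fibration $\pi:X\to\pr^n$ and $Y=\pi^{-1}D$ is a smooth vertical hypersurface (Theorem~\ref{t2}). Your proposal is in fact an attempt at Theorem~\ref{t2}, so I compare it to that.

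Your set-up is correct and matches the paper: the identification of $F|_{X_b}$ with the constant line $\ell_b=N^*_{D/\pr^n,b}\subset\mathrm{Lie}(X_b)$ is exactly the content of Lemma~\ref{thm4} and Proposition~\ref{thm7} (the paper phrases it as ``$F|_{X_b}$ is a one-dimensional subspace of $\h^0(X_b,T_{X_b})$, invariant under translations''), and the reduction to ruling out a family $\{A_b\subsetneq X_b\}$ of proper abelian subvarieties is also the paper's reduction. The divergence is in how the contradiction is obtained.

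You try to work with the weight-one variation $R^1\pi_*\Q|_{D^\circ}$ and its putative proper sub-variation $\mathbb{W}$, and you correctly isolate the obstacle: nothing in the hypotheses guarantees that the monodromy of $R^1\pi_*\Q$ stays irreducible after restriction to $D^\circ$, nor that the Gauss map of $D$ is non-degenerate (indeed $D$ may be a hyperplane). The two ``routes'' you sketch both hinge on precisely this unproved input, so as written the proposal has a genuine gap at its key step --- the one you yourself flag.

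The paper sidesteps this difficulty by moving the argument from $\h^1$ to $\h^2$. The crucial external input is Theorem~\ref{cohom} (Oguiso, after Voisin--Matsushita): the restriction $\h^2(X,\Q)\to\h^2(X_b,\Q)$ has rank one. The paper then proves the analogue with $X$ replaced by $Y$ (Theorem~\ref{t1}) in two steps. First, for $D$ generic with respect to $\Delta$, a Lefschetz-type theorem (Lemma~\ref{lef}, via Goresky--MacPherson) gives that $\pi_1(D\setminus\Delta)\twoheadleftarrow\pi_1(\pr^n\setminus\Delta)$ is \emph{surjective}, so the monodromy invariants in $\h^2(X_b,\Q)$ are the same over $D^\circ$ as over $\pr^n\setminus\Delta$; Deligne's invariant cycle theorem then transfers the rank-one statement from $X$ to $Y$. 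Second, for arbitrary smooth vertical $Y$, a short deformation trick (Lemma~\ref{topp}) replaces $D$ by a linearly equivalent generic $D'$ through the same fiber $X_b$ without changing the image of $\h^2(Y,\Q)$ in $\h^2(X_b,\Q)$. Finally, if a general leaf closed up to a proper $A_b\subsetneq X_b$, the quotient fibration $\Sigma_{gen}\dashrightarrow D$ would carry a relatively ample class whose pullback to $X_b$ is effective but not ample, giving a \emph{second} class in the image of $\h^2(Y,\Q)\to\h^2(X_b,\Q)$ and contradicting Theorem~\ref{t1}.

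In short: your proposal needs irreducibility of $R^1\pi_*\Q|_{D^\circ}$, which you do not establish; the paper replaces this by a $\pi_1$-surjectivity statement (robust under the Lefschetz theorem) combined with the already-known rank-one result in $\h^2$, and handles non-generic $D$ by deforming it inside its linear system rather than by controlling the Gauss map.
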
 
Let us explain why this conjecture is plausible and formulate the results we achieved.\\

 \textbf{Case of $q(Y,Y)<0$.} In this case the conjecture is easy to prove. By \cite[Theorem~ 4.2 and Proposition~4.7]{Bo} $Y$ is uniruled, if $q(Y,Y)<0$. There is a dominant rational map $f:Y\dasharrow W$, such that the fibers of $f$ are rationally connected (see \cite{cam} and \cite[Chapter ~IV.5]{KolRC}). Rationally connected varieties do not have non-zero holomorphic differential forms. Thus, the form $\sigma|_Y$ is the pull-back of some form $\omega \in \h^0(W, \Omega^2_W)$ and for any point $x$ in $Y$ the relative tangent space $T_{Y/W,x}$ is  the kernel of the form $\sigma|_Y$. The kernel of $\sigma|_Y$ has dimension one at every point. So, the rational map $f:Y\dasharrow W$ is a fibration in rational curves and these rational curves are the leaves of the foliation $F$.\\

\textbf{Case of $q(Y,Y)=0$}. Conjecturally, $X$ admits a rational lagrangian fibration, and hypersurface $Y$ is the inverse image of a hypersurface of its base (this conjecture was proved for manifolds of K3 type in \cite{BM} and for manifolds of Kummer type \cite{yos}, for O'Grady 6 type in \cite{MR} and for O'Grady 10 type in \cite{MO}). Moreover, a rational  Lagrangian fibration can be replaced with a regular Lagrangian fibration, if we assume that the divisor $Y$ is numerically effective. In this work  we consider an irreducible holomorphic symplectic manifold $X$ equipped with a regular Lagrangian fibration $\pi:X\to B$. We call a preimage $Y$ of divisor $D$ in $B$ vertical. Hwang and Oguiso in \cite{HO} prove that the characteristic foliation on the discriminant hypersurface is algebraically integrable, but this hypersurface is very singular. In this work we consider only smooth vertical hyperspaces.  Our result is as follows.
\begin{thm}\label{t2}Let $X$ be a projective irreducible holomorphic symplectic manifold and \\$\pi:X \to B$ a Lagrangian fibration. Consider a hypersurface $D$ in $B$ such that its preimage $Y$ is a smooth irreducible hypersurface in $X$. Then the closure of a generic leaf of the characteristic foliation on $Y$ is a fiber of $\pi$ (hence an abelian variety of dimension $n$). 
\end{thm}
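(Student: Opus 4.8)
The plan is to reduce the theorem to a statement about abelian varieties and then to exploit the fact that the restriction of the characteristic foliation to a general fibre of $\pi$ is translation-invariant.

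First I would carry out the pointwise computation that identifies the characteristic direction and, in particular, recovers the fact that the leaves lie in the fibres. Let $b\in D$ lie over the smooth locus of $\pi$ and be a smooth point of $D$; this excludes only a proper closed subset of $D$, so it is harmless for a general leaf. Then $A_b:=\pi^{-1}(b)$ is a smooth abelian variety contained in $Y$, and for $y\in A_b$ we have $T_yA_b\subset T_yY$ with $d\pi$ inducing an isomorphism $T_yY/T_yA_b\cong T_bD\subset T_b\pr^{n}$. Since $A_b$ is Lagrangian, $\sigma$ induces a perfect pairing between $T_yA_b$ and $T_yX/T_yA_b\cong T_b\pr^{n}$; unwinding the definition of the characteristic foliation, the kernel of $\sigma|_{T_yY}$ is exactly the one-dimensional subspace of $T_yA_b$ orthogonal, for this pairing, to the hyperplane $T_bD$. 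In particular the leaves of $F$ lie in the fibres of $\pi$.

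Next I would prove that $F|_{A_b}$ is translation-invariant. The isomorphism $\sigma\colon T_{A_b}\xrightarrow{\ \sim\ }N^{\vee}_{A_b/X}$ supplied by the Lagrangian condition is an isomorphism of \emph{trivial} vector bundles on $A_b$: the source equals $T_0A_b\otimes\ob_{A_b}$ because $A_b$ is an abelian variety, and the target equals $\Omega^1_{\pr^{n},b}\otimes\ob_{A_b}$ because $N_{A_b/X}\cong\pi^{\ast}T_{\pr^{n}}|_{A_b}$ is trivial, $A_b$ being a fibre. A morphism between trivial bundles on the projective variety $A_b$ is given by a single constant linear map $T_0A_b\to\Omega^1_{\pr^{n},b}$; hence the characteristic line at $y$, transported to $T_0A_b$ by the canonical trivialization $T_yA_b\cong T_0A_b$, does not depend on $y$. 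Denote it $\ell_b\subset T_0A_b$. Then $F|_{A_b}$ is the translation-invariant foliation with constant tangent direction $\ell_b$, so the closure of the leaf through $y\in A_b$ is the coset $y+B_b$, where $B_b\subseteq A_b$ is the smallest abelian subvariety with $\ell_b\subseteq T_0B_b$. The theorem is thus equivalent to the assertion that $B_b=A_b$ for a general $b\in D$.

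Finally I would argue by contradiction. If $B_b\ne A_b$ for a general $b$, then, since $B_b$ is canonically attached to the pair $(A_b,\ell_b)$ and so varies algebraically with $b$, after shrinking $D$ to a dense open $D^{\circ}$ over which $\pi$ and $D$ are smooth we may assume $\dim B_b=k$ is constant with $1\le k\le n-1$ and that the cosets of the $B_b$ foliate $\pi^{-1}(D^{\circ})$. If $k=1$, then $F$ is algebraically integrable, so by Theorem~\ref{ac} the hypersurface $Y$ is uniruled; but then, exactly as in the discussion of the case $q(Y,Y)<0$ in the introduction, $F$ is a fibration by rational curves, which is impossible because its leaves lie in the abelian varieties $A_b$, which contain no rational curves. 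If $k\ge2$, the quotient tori $C_b:=A_b/B_b$, of dimension $n-k\ge1$, form a nontrivial sub-variation of Hodge structure of $R^{1}\pi_{\ast}\Q|_{D^{\circ}}$; using the identification $\pi_{\ast}\Omega^1_{X/\pr^{n}}\cong T_{\pr^{n}}$ (again a consequence of the Lagrangian condition) one checks that its Hodge subbundle $\h^{1,0}(C_{\bullet})$ is realized as a subbundle of $T_{D^{\circ}}$. I would then derive a contradiction from the interaction of this subbundle with the Gauss--Manin connection — when the sub-variation extends over the complement of the discriminant of $\pi$, from the irreducibility of the monodromy of $\pi$; or, in general, from the strong constraints on the Donagi--Markman cubic of a Lagrangian fibration over $\pr^{n}$. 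This last step — excluding leaf closures of intermediate dimension — is where I expect the real difficulty to be; everything before it is essentially the translation-invariance computation.
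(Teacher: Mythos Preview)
Your first two steps—showing that leaves lie in fibres and that the restriction of $F$ to a smooth fibre $A_b$ is a translation-invariant linear foliation, so that leaf closures are translates of a fixed abelian subvariety $B_b\subseteq A_b$—are correct and coincide with the paper's Lemma~\ref{thm4} and Proposition~\ref{thm7}. Your handling of the case $k=1$ via Theorem~\ref{ac} is also fine, though the paper does not single it out.

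The genuine gap is exactly where you locate it: the exclusion of $1<k<n$. Neither of your proposed routes is worked out. For the monodromy route you would need the sub-variation of Hodge structure over $D^{\circ}$ to come from one over $\pr^n\setminus\Delta$, or at least that the monodromy over $D^{\circ}$ has the same invariants as over $\pr^n\setminus\Delta$; this is not automatic, since $D$ may sit badly with respect to $\Delta$ (and indeed the paper needs a separate deformation argument to handle such $D$). The Donagi--Markman suggestion is too vague to assess.

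The paper fills this gap by a different and more concrete mechanism. It first proves Theorem~\ref{t1}: the restriction $\h^2(Y,\Q)\to\h^2(A_b,\Q)$ has rank one. For a generic $D$ this follows from a Lefschetz-type surjection $\pi_1(D\setminus\Delta)\twoheadrightarrow\pi_1(\pr^n\setminus\Delta)$ together with Deligne's invariant cycle theorem and Oguiso's rank-one result for $\h^2(X,\Q)\to\h^2(A_b,\Q)$ (Theorem~\ref{cohom}); a short deformation lemma (Lemma~\ref{topp}) then transports the conclusion to an arbitrary smooth vertical $Y$. Second, assuming $B_b\subsetneq A_b$ for general $b$, the rational map $p\colon Y\dashrightarrow\Sigma_{gen}$ of Proposition~\ref{fibr} gives a fibration $\Sigma_{gen}\dashrightarrow D$ in the positive-dimensional quotient tori $A_b/B_b$; pulling back a relatively ample divisor on this family to $Y$ produces a class whose restriction to $A_b$ is effective but not ample, hence linearly independent of the restriction of an ample class on $Y$. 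That gives two independent classes in $\im\bigl(\h^2(Y,\Q)\to\h^2(A_b,\Q)\bigr)$, contradicting Theorem~\ref{t1}. This handles all $1\le k\le n-1$ at once, so your case split is unnecessary. The point you are missing is to work with $\h^2$ rather than $\h^1$: the obstruction to a proper $B_b$ is a second \emph{divisor class} on the fibres, and the Lefschetz/Oguiso input is precisely what forbids that.
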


 \begin{Ackn}The author of this paper is very grateful to Ekaterina Amerik for kind and patient mentorship. I also want to thank Alexey Gorinov, Nikolay Konovalov, Misha Verbitsky, Fabrizio Anella and Daniel Huybrechts for helpful conversations.
 \end{Ackn}

\section{Foliations and invariant subvarieties\label{s2}}
In this section we recall some definitions related to foliations and  preliminary results about the Zariski closure of the leaves. First, we define the foliations and leaves and also mention the Frobenius theorem.
\begin{dfn}Let $X$ be a smooth variety. A (singular) foliation is a saturated
subsheaf $F \subset T_X$ which is closed under the Lie bracket, i.e. $[F, F] \subset F$. The singularity locus $Sing(F)$ of $F$ is the subset of $X$ on which $T_X/F$ is not locally free, and it has codimension at least $2$ in X . A leaf of $F$ is the maximal connected injectively immersed complex analytic submanifold $L \subset X\setminus sing(F)$ such that $T_L =F|_L$.
\end{dfn}
A saturated subsheaf $F \subset T_X$ which does not necessarily satisfy the property $[F, F] \subset F$ is called a distribution. The property $[F, F] \subset F$ is needed for the existence of leafs. 
\begin{thm}[Frobenius]\label{thmfrob} Let $X$ be a smooth variety and $F\subset T_X$ a distribution on $X$. We say that $F\subset T_X$ is integrable if there exists a leaf (i.e. locally closed submanifold $L \subset X\setminus sing(F)$ such that $T_L =F|_L$) through every point of $X\setminus sing(F)$. The distribution $F$ is integrable if and only if $F$ is closed  under the Lie bracket, i.e. $[F, F] \subset F$. 
\end{thm}
\begin{proof}
See for example \cite[Book~I, Theorem~2.20]{Vo}.
\end{proof}
 \begin{dfn}
 If every leaf of a foliation is algebraic we call this foliation {\bf algebraically integrable}.
 \end{dfn}
 In this work we study only foliations of rank one. It follows from theorem \ref{thmfrob} that all distributions of rank one are foliations (i.e. integrable).  
\begin{dfn}\label{inv}  Let $Y$ be  a closed smooth subvariety of $X$. One says it is {\bf invariant under the foliation}  $F$  or {\bf $F$--invariant} if $T_Y$ contains $F|_Y$.
\end{dfn}
The Zariski closure of a leaf through a point $x$ is the smallest invariant under $F$ subvariety containing this point. We denote it by $\overline{Leaf}^{Zar}(x,F)$.
There the following results of Philippe Bonnet from the work \cite{Bo}. He states them for an affine variety $X$. Nevertheless, these statements for an affine $X$ obviously lead to the analogous statements for a projective variety. Thus, let us reformulate them for a projective variety $X$.
\begin{thm}\cite[Theorem~1.3]{bo2} Let $X$ be a projective variety with a foliation $F$.
There is an integer $m$ such that $m$ is equal to the dimension of the Zariski closure of the leaf of $F$ through a very general point $x\in X$. We call this integer $m$ the dimension of the Zariski closure of a generic leaf. Moreover, the dimension of the Zariski closure of the leaf through every point $x\in X$ is not greater than $m$.
\end{thm}
   
\begin{prop}\cite[Theorem~1.4]{bo2}\label{fibr} 
Let $X$ be a smooth projective variety of dimension $n$ with a foliation $F$. Assume that the Zariski closure of a very general leaf of $F$ has dimension $m<n$. Then there exists a rational map $X\dashrightarrow W$ with $F$-invariant fibers of dimension $m$ and a very general fiber of this map is the Zariski closure of a leaf of $F$. 
\end{prop}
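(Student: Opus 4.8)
The plan is to produce the rational map as the quotient by the family $\mathcal{U}_{gen}\to\Sigma_{gen}$ provided by Lemma \ref{bbbb}, and to check that it has the desired properties. First I would recall that by Lemma \ref{bbbb} there is a unique irreducible flat family $\mathcal{U}_{gen}\subset\Sigma_{gen}\times X$ of $F$-invariant subschemes that dominates $X$ and whose very general member is $\overline{Leaf}^{Zar}(x,F)$ for $x$ very general; by hypothesis each such member has dimension $m$. Consider the evaluation morphism $ev\colon\mathcal{U}_{gen}\to X$. Since $\mathcal{U}_{gen}$ dominates $X$ and $\dim\mathcal{U}_{gen}=\dim\Sigma_{gen}+m$ while a general member has dimension exactly $m$, the map $ev$ is generically finite onto $X$; in fact I claim it is generically injective, so that $\dim\Sigma_{gen}=n-m$. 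Indeed, if two distinct members $Z_1,Z_2$ of $\mathcal{U}_{gen}$ met along a positive-dimensional locus through a general point, then by Lemma \ref{int} their intersection would be $F$-invariant, and sweeping out such intersections as the members vary would, exactly as in the uniqueness argument in the proof of Lemma \ref{bbbb}, yield a dominating family of $F$-invariant subschemes of dimension $<m$ whose member through a general point would have to contain the $m$-dimensional Zariski closure of the leaf through that point — a contradiction. (If instead $Z_1$ and $Z_2$ shared a general point but not a component, already $Z_1\cap Z_2\subsetneq Z_1$ is a smaller invariant subscheme containing that point, contradicting minimality of the Zariski closure of the leaf.) So after removing from $X$ a countable union of proper subvarieties, $ev$ restricts to an isomorphism onto its image, and composing its inverse with the projection $\mathcal{U}_{gen}\to\Sigma_{gen}$ defines a rational map $\varphi\colon X\dashrightarrow\Sigma_{gen}$.

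Next I would verify the two asserted properties of $\varphi$. For a general point $s\in\Sigma_{gen}$, the fiber $\varphi^{-1}(s)$ coincides, on the open locus where $ev$ is an isomorphism, with the member $\mathcal{U}_{gen,s}\subset X$, which is $F$-invariant of dimension $m$; and for $s$ very general this member is by construction the Zariski closure of a leaf of $F$. This gives exactly the statement: $\varphi$ has $F$-invariant fibers of dimension $m$, and the general fiber is the Zariski closure of a leaf. One small point to address is that a priori the members of $\mathcal{U}_{gen}$ are subschemes, possibly non-reduced; but $\overline{Leaf}^{Zar}(x,F)$ is reduced and irreducible, so the general member is reduced and irreducible, and we may take $\Sigma_{gen}$ to parametrize the underlying varieties.

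The main obstacle is the generic injectivity of $ev$ — equivalently, ruling out that the minimal dominating family $\mathcal{U}_{gen}$ has members overlapping in positive dimension through a general point. The argument above reduces this to the minimality of $\overline{Leaf}^{Zar}(x,F)$ among $F$-invariant subschemes through $x$ (which is the remark following Lemma \ref{int}) together with the intersection-stability Lemma \ref{int} and the flattening-then-closure trick already used in Lemma \ref{bbbb}; so no genuinely new input is needed, but this is the step where care is required. Everything else — that $ev$ is dominant and generically finite, that the quotient map is well defined off a countable union of proper subvarieties, and that the general fiber is the Zariski closure of a leaf — follows directly from Lemma \ref{bbbb} and Corollary \ref{dima}.
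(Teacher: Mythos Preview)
Your approach is the same as the paper's: take the evaluation map $ev:\mathcal{U}_{gen}\to X$, show it is birational, and define the rational map as the composition of $ev^{-1}$ with the projection $\mathcal{U}_{gen}\to\Sigma_{gen}$. The only difference is that your argument for generic injectivity of $ev$ is more elaborate than necessary. The paper simply takes a point $x$ for which $\overline{Leaf}^{Zar}(x,F)$ is itself a member of $\mathcal{U}_{gen}$; any other member $Z$ through $x$ is $F$-invariant and hence contains $\overline{Leaf}^{Zar}(x,F)$, and since members of a flat family share a Hilbert polynomial this forces $Z=\overline{Leaf}^{Zar}(x,F)$, so $ev^{-1}(x)$ is a single point. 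Your case split into ``positive-dimensional overlap'' versus ``share a point but not a component'' and the appeal to the sweeping-out construction from Lemma~\ref{bbbb} are not needed; the parenthetical observation you make (that $Z_1\cap Z_2$ is a proper invariant subscheme through $x$, contradicting minimality) already does the job once you note that equal Hilbert polynomial rules out strict containment.
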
{}
 \section{Lagrangian fibrations on irreducible holomorphic symplectic manifolds \label{s1}}In this section we define Lagrangian fibrations and recall some results on them.
\begin{thm}[\cite{Ma}] \label{thm3}
Let $X$ be an irreducible  holomorphic symplectic manifold of dimension $2n$  and $\pi: X \to B$  be a regular morphism with connected fibers. Assume that $B$ is a normal variety and $0<\dim B < 2n$. Then:
\begin{itemize}
\item $B$ has dimension $n$;
\item Every fiber of $\pi$ is a {\bf Lagrangian}  subvariety i.e. the restriction of $\sigma$ is zero;
\item Moreover, if a fiber is smooth, it is an abelian variety. 
\end{itemize} 
\end{thm}
\begin{dfn} The morphism $\pi$ as in the  previous theorem is called a {\it Lagrangian fibration}. 
\end{dfn}
\begin{thm}[\cite{Hw}] If $B$ is smooth, then  $B\cong \pr^n$.
\end{thm}
The base $B$ is conjectured to be always smooth. For $n=2$ this conjecture was proved in \cite{HX}.  The authors of \cite{BK} proved this conjecture for all $n$ but for some specific type of singularities of $B$. In the present paper we will not assume that this conjecture is true.\\ 
A Lagrangian subvariety is not always a fiber of a Lagrangian fibration. 
\begin{ex}\label{lagrhilb}

Let $S$ be a $K3$--surface and $C$ be a smooth curve in $S$. Clearly, $C$ is Lagrangian in $S$, but let us construct a more interesting example. Let $Z$ be the strict transform of $C^{(n)}$ in the Hilbert scheme $S^{[n]}$ under the Hilbert-Chow map. The subvariety $Z$ is Lagrangian. Indeed, the preimage of $\sigma_{S^{[n]}}$ in $S^n$ is $p_1^*\sigma_S+...+p_n^*\sigma_S$, where $p_i$ is the $i$--th projection of $S^n$ to $S$. Since $\sigma_S|_C=0$, the restriction of $p_1^*\sigma_S+...+p_n^*\sigma_S$ to $C^{(n)}$ is zero.\end{ex}
\begin{thm}\cite[Theorem~0.4]{SY}\label{cohom}Let $X$ be a projective irreducible holomorphic symplectic variety of dimension $2n$ equipped with a Lagrangian fibration $\pi : X \to B$. Then the restriction of $\h^d(X, \mathbb{Q})\to \h^d(X_b, \mathbb{Q})$ to any nonsingular fiber $X_b\subset X$ is zero for any odd $d$ and has rank one for any even $d$, which is not greater than $2n$. 
\end{thm}

It is well-known that a smooth fiber of a Lagrangian fibration is an abelian variety. Concerning singular fibers there is the following result.
\begin{prop}\cite[Proposition~2.2]{HO}\label{hwang}
Let $\Delta \subset \pr^n$ be the set of points $b \in \pr^n$ such that the fiber $X_b$ is singular. Then:\begin{enumerate}
    \item 
 The set $\Delta$ is a hypersurface in $\pr^n$. We call it the {\bf discriminant hypersurface}.
 \item The normalization of a component of a general fiber of $\pi$ over $\Delta$ is smooth \item 
 The singular locus  of a component of a general singular fiber is a disjoint union of $(n-1)$-dimensional complex tori. 
 \end{enumerate}{}
\end{prop}

\begin{dfn} Let $Y$ be a hypersurface in $X$. If there exists a hypersurface $D$ in the base of a Lagrangian fibration $\pi: X\to B$ such that $Y=\pi^{-1}(D)$, $Y$ is called a {\bf vertical} hypersurface.  
\end{dfn}

Let us give the simplest example of a Lagrangian fibration.
\begin{ex}\label{elk3} Let $S$ be a $K3$ surface with an elliptic fibration $\pi:S \to \pr^1$. This fibration induces a morphism  $$\pi^{(n)}:S^{(n)} \to \pr^n; s_1+s_2+...+s_n \mapsto \pi(s_1)+\pi(s_2)+...+\pi(s_n),$$
where $\pr^n$ is considered as $n$--th symmetric power of $\pr^1$. Composing this morphism with the Hilbert-Chow map, we obtain a morphism with connected fibers from $S^{[n]}$ to $\pr^{n}$. Thus, by Theorem \ref{thm3} it is a Lagrangian fibration. Let $b_1, b_2, ..., b_m\in \pr^1$ be the points such that the fiber $\pi^{-1}(b_i)$ is singular. The discriminant locus of the fibration $\pi$ is the union of the hyperplanes $H_i:=b_i+x_1+x_2+...+x_{n-1}$ and of the hypersurface $\Delta_0:=2x_1+x_2+...+x_{n-1}$, which is tangent to each $H_i$. In particular, for $n=2$ the discriminant hypersurface is the union of the diagonal conic and of the lines tangent to this conic.
\end{ex}

\section{Monodromy of a vertical hypersurface}
To prove theorem \ref{t2} we will need to know some information about the monodromy action on a smooth fiber of a smooth vertical hypersurface. The goal of this section is to prove the following result.
\begin{thm}\label{t1}Let $X$ be a projective irreducible holomorphic symplectic manifold and let $\pi:X \to B$ be a Lagrangian fibration. Consider a hypersurface $D$ in $B$ such that its preimage $Y$ is a smooth irreducible hypersurface in $X$. Let $X_b\subset Y$ be a smooth fiber of $\pi$, then the morphism $\h^2(Y,\Q)\to \h^2(X_b,\Q)$ has $\rk$ one.
\end{thm}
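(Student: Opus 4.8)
The plan is to realise the image of $\h^2(Y,\Q)\to\h^2(X_b,\Q)$ as a space of monodromy invariants and to show that restricting the Lagrangian fibration to $D$ does not enlarge it. The lower bound is immediate: a general fibre $X_b$ is an abelian variety of dimension $n$, and if $A$ is an ample class on $X$ then $h:=A|_{X_b}$ is a non-zero polarisation on $X_b$ which, since $X_b\subset Y$, lies in the image of $\h^2(Y,\Q)\to\h^2(X_b,\Q)$; hence this map has rank at least $1$. Everything is in the reverse inequality.

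Let $\Delta\subset\pr^n$ be the discriminant of $\pi$, put $U=\pr^n\setminus\Delta$, and set $D^{\circ}=(D\cap U)\setminus D_{\mathrm{sing}}$; this is a Zariski-dense open subset of $D$, the restriction $\pi^{\circ}:=\pi|_{\pi^{-1}(D^{\circ})}\colon Y^{\circ}\to D^{\circ}$ is smooth projective, and $Y^{\circ}$ is a Zariski-dense open subset of the smooth projective variety $Y$. By Deligne's global invariant cycle theorem, for $b\in D^{\circ}$ the image of $\h^2(Y,\Q)\to\h^2(X_b,\Q)$ is the space of monodromy invariants $\h^2(X_b,\Q)^{\pi_1(D^{\circ},b)}$. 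The same theorem applied to $\pi^{-1}(U)\subset X$ shows that the image of $\h^2(X,\Q)\to\h^2(X_b,\Q)$ is $\h^2(X_b,\Q)^{\pi_1(U,b)}$, and for a Lagrangian fibration this space is one-dimensional, spanned by $h$: since $\h^1(X)=0$ we have $\h^1(X_b,\Q)^{\pi_1(U,b)}=0$; the monodromy on $V:=\h^1(X_b,\Q)$ preserves the polarisation form and is infinite (otherwise $\pi$ would be isotrivial, impossible for a Lagrangian fibration), so its Zariski closure is $\mathrm{Sp}(V)$, and $(\wedge^2 V)^{\mathrm{Sp}(V)}=\Q h$, where $\wedge^2 V=\h^2(X_b,\Q)$. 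As $D^{\circ}\subseteq U$, we obtain $\Q h\subseteq\h^2(X_b,\Q)^{\pi_1(U,b)}\subseteq\h^2(X_b,\Q)^{\pi_1(D^{\circ},b)}$, so it remains to show that passing from $U$ to $D^{\circ}$ does not create new invariants. It suffices for this that $\pi_1(D^{\circ},b)\to\pi_1(U,b)$ be surjective --- equivalently, more weakly, that the monodromy of $\pi^{\circ}$ already be Zariski dense in $\mathrm{Sp}(V)$.

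For $n=1$ the statement is trivial ($Y$ is a finite union of fibres), so assume $n\geq 2$. To prove $\pi_1(D^{\circ})\twoheadrightarrow\pi_1(U)$ I would use the Lefschetz-type theorem of Hamm--L\^e for fundamental groups of quasi-projective varieties (see also Goresky--MacPherson's stratified Morse theory): if $W$ is a smooth quasi-projective variety of dimension $n\geq 2$ and $D$ is a hypersurface transverse to a fixed Whitney stratification of a compactification of $W$, then $\pi_1(W\cap D)\to\pi_1(W)$ is surjective. One applies this with $W=U$ and with a Whitney stratification of $\pr^n$ refining $\{U,\Delta\}$ to which $R\pi_*\Q_X$ is constructible. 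The crucial point is that the hypothesis ``$Y=\pi^{-1}(D)$ smooth'' forces $D$ to be transverse to every stratum of this stratification: over the generic point of a component of $\Delta$ the fibration $\pi$ has an explicit local model (in the spirit of the analysis of Hwang--Oguiso \cite{HO}), and the preimage of an arc transverse to $\Delta$ is smooth while that of a tangent arc is not, and similarly over the deeper strata. This gives the desired surjectivity, and hence the Theorem.

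I expect the main obstacle to be precisely this last geometric step --- deducing from the smoothness of $Y$ that $D$ is transverse to all strata of the discriminant, which requires a sufficiently precise description of the degenerate fibres of $\pi$ (and of the associated local monodromy) over the generic point of each stratum. Should this transversality, or the resulting surjectivity, prove delicate over the deeper strata --- in particular for small $n$, where $D$ itself may be far from simply connected --- one would instead aim at the weaker target isolated in the second paragraph: the local monodromies of $\pi^{\circ}$ around the components of $D\cap\Delta$ are symplectic transvections, and one shows that their $\pi_1(D^{\circ})$-orbit spans $V$ (otherwise $V$ would carry a non-zero $\pi_1(D^{\circ})$-invariant, i.e.\ a non-zero class in the image of $\h^1(Y,\Q)\to\h^1(X_b,\Q)$, which one excludes), whence a standard transvection-generation criterion yields Zariski density of the monodromy of $\pi^{\circ}$ in $\mathrm{Sp}(V)$.
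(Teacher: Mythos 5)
Your first half coincides with the paper's: Deligne's invariant cycle theorem identifies the image of $\h^2(Y,\Q)\to\h^2(X_b,\Q)$ with the monodromy invariants over the smooth part of $\pi|_Y$; the rank-one statement for $\h^2(X,\Q)\to\h^2(X_b,\Q)$ is available (the paper simply quotes Theorem \ref{cohom} of Oguiso/Voisin/Matsushita rather than re-deriving it --- your derivation via ``the monodromy is infinite, hence Zariski dense in $\mathrm{Sp}(V)$'' is not a valid inference and is in any case unnecessary); and the remaining point is the surjectivity of $\pi_1(D\setminus \Delta)\to\pi_1(\pr^n\setminus\Delta)$, which the paper also obtains from a Lefschetz-type theorem for quasi-projective varieties (the Goresky--MacPherson version, Lemma \ref{lef}).

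The genuine gap is exactly the step you flag as the ``main obstacle'': smoothness of $Y$ does \emph{not} force $D$ to be transverse to the deeper strata of $\Delta$, so surjectivity of $\pi_1$ cannot be obtained this way for an arbitrary smooth vertical $Y$. The paper shows in Section \ref{app} that smoothness of $Y$ does imply that $D$ is smooth and transverse to $\Delta^{sm}$, but it explicitly notes that the needed control of $D\cap\Delta^{sing}$ can fail: in Example \ref{cusp} (the Beauville--Mukai system on a degree-two K3), a hypersurface $D$ through a cusp of the discriminant curve can still have smooth preimage $Y$, whereas Lemma \ref{lef} would require $D\cap\Delta^{sing}=\emptyset$ when $n=2$. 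Your fallback (local monodromies are symplectic transvections whose orbit spans $V$) is also unsubstantiated: the local monodromy of a degenerating abelian fibration around a component of $\Delta$ need not be a single transvection, and excluding invariants in $\h^1(X_b,\Q)$ requires controlling the image of $\h^1(Y,\Q)$, which you do not do. The paper's resolution is a mechanism your proposal lacks, Lemma \ref{topp}: connect $Y$ by a path, inside the linear system of divisors linearly equivalent to $Y$ and containing $X_b$, avoiding singular members, to a generic such $Y'$ satisfying the hypotheses of Lemma \ref{lef}; local topological triviality along the path shows that the image of $\h^2(Y_t,\Q)$ in $\h^2(X_b,\Q)$ does not change, so the rank-one statement proved for $Y'$ (Corollary \ref{pr}) transfers to $Y$. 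Without this (or some substitute) your argument establishes the theorem only for generic $D$ in its linear system.
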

To study the monodromy action on the fibers one should throw away the discriminant divisor $\Delta$ and singular locus $B^{sing}$ of $B$ (in the case $B$ is not smooth). Let us denote $B\setminus (B^{sing}\cup \Delta)$ by $B^o$. Proposition  \ref{cohom} says that $\pi_1(B^o)$ fixes only one-dimensional vector space in $\h^2(X_b,\mathbb{Z})$. Consider a hypersurface $D\subset B$ in the base of the Lagrangian fibration. We denote by $D^o$ the intersection of $B^o$ and $D$. By \cite[Part~II, Section~5.1]{GM} the pushforward of the fundamental group $\pi_1(D^o)\to\pi_1(B^o)$ is surjective $\dim B=2$ and is isomorphism if $\dim B>2$ for a generic $D$. Now we can apply Deligne's invariant cycle theorem (see e.g \cite[Book~II, Theorem~3.2]{Vo}). Let us recall it.
\begin{thm}
Let $X\to Y$ be a smooth projective morphism between smooth quasi-projective varieties.  Then for any point $y\in Y$ and for any integer $k$ the space of the invariants under the monodromy action $\pi_1(Y,y)\to \aut \h^k(X_y,\Q)$ is equal to the image of the restriction map $\h^k(\Bar{X}, \Q) \to \h^k(X_y,\Q)$, for any smooth projective compactification $\Bar{X}$ of $X$\footnote{Here $X_y$ is the fiber $f^{-1}(y).$}.
\end{thm}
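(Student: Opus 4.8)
\emph{Proof proposal.} The statement is Deligne's global invariant cycle theorem. The monodromy representation $\pi_1(Y,y)\to\aut\h^k(X_y,\Q)$ only makes sense when $R^kf_*\Q$ is a local system, that is, when $f\colon X\to Y$ is a \emph{smooth} projective fibration; this is exactly the situation in which the theorem will be applied (with $Y$ the complement of the discriminant and $X_y$ a smooth fibre), so this is the case I treat, with $\overline{X}$ a fixed smooth projective compactification of $X$. The plan is to establish the two inclusions separately: the containment of the image in the invariants is formal and comes from the Leray spectral sequence, while the reverse containment combines Deligne's degeneration theorem with the strictness properties of mixed Hodge structures.

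First I record the object of comparison. Since $Y$ is connected, $\h^0(Y,R^kf_*\Q)=\h^k(X_y,\Q)^{\pi_1(Y,y)}$, because global sections of a local system are exactly the fibrewise monodromy invariants. The restriction map $\h^k(X,\Q)\to\h^k(X_y,\Q)$ factors through the edge homomorphism $\h^k(X,\Q)\to\h^0(Y,R^kf_*\Q)$ of the Leray spectral sequence $E_2^{p,q}=\h^p(Y,R^qf_*\Q)\Rightarrow\h^{p+q}(X,\Q)$, so its image always lands in the invariants. As $\h^k(\overline{X},\Q)\to\h^k(X_y,\Q)$ factors through $\h^k(X,\Q)$, this already gives the inclusion $\im(\h^k(\overline{X})\to\h^k(X_y))\subseteq\h^k(X_y,\Q)^{\pi_1(Y,y)}$.

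For the reverse inclusion I would use two ingredients. The first is Deligne's theorem that for a smooth projective morphism the Leray spectral sequence degenerates at $E_2$; this forces the edge map $\h^k(X,\Q)\to\h^0(Y,R^kf_*\Q)$ to be \emph{surjective}, so every invariant class already lifts to the open total space $X$. It then remains to promote the lift to $\overline{X}$, and here the second ingredient is mixed Hodge theory. The image of $\h^k(\overline{X},\Q)\to\h^k(X,\Q)$ is precisely the lowest weight piece $W_k\h^k(X,\Q)$, while $\h^k(X_y,\Q)$ is \emph{pure} of weight $k$ since $X_y$ is smooth projective. Because $\h^k(X,\Q)\to\h^k(X_y,\Q)$ is a morphism of mixed Hodge structures and such morphisms are strict for the weight filtration, its image coincides with the image of $W_k\h^k(X,\Q)=\im(\h^k(\overline{X})\to\h^k(X))$. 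Thus every invariant class, being a restriction of a class on $X$ that lands in a pure weight-$k$ target, in fact comes from $\overline{X}$, and combining the two inclusions yields the equality.

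The main obstacle is the input that the Leray spectral sequence degenerates at $E_2$: this is the deep geometric fact, resting on the hard Lefschetz theorem applied fibrewise and on the compatibility of the Lefschetz operator with the local system structure (the semisimplicity underlying Deligne's \emph{theorem of the fixed part}). The mixed Hodge statements — that $W_k\h^k(X)$ is the image from a smooth compactification, and strictness of morphisms of mixed Hodge structures — are standard once Deligne's theory is granted. The only remaining point to spell out is the treatment of a family with singular fibres, where one restricts to the open locus over which $f$ is smooth before running the argument above.
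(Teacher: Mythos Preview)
The paper does not prove this statement: it is quoted as Deligne's invariant cycle theorem with a reference to \cite[Theorem~3.2]{Vo} and used as a black box. Your sketch is the standard argument (Leray degeneration at $E_2$ for a smooth projective morphism, plus strictness of morphisms of mixed Hodge structures and the identification $W_k\h^k(X,\Q)=\im(\h^k(\overline{X},\Q)\to\h^k(X,\Q))$), and it is correct; there is nothing in the paper to compare it against.
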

For a generic $D$ the map $\pi(D^o)\to \pi(B^o)$ is surjective.  Hence, the groups $\pi(D^o)$ and $\pi(B^o)$ fix the same sublattice in $H^2(X_b,\mathbb{Q})$. Combining this with theorem \ref{cohom} we obtain the following. 
\begin{cor}\label{pr}The statement of theorem \ref{t1} is true for a generic $D$.
\end{cor}
Finally, the following Lemma makes the proof work for an arbitrary smooth vertical hypersurface.
\begin{lem}\label{topp}
Let $X$ be a smooth projective variety and $D_0,D_1$ the linearly equivalent smooth hypersurfaces in $X$. Let $Z$ be a smooth subvariety of $X$ contained in $D_0 \cap D_1$, then $\h^i(D_0,\Q)$ and $\h^i(D_1,\Q)$ have the same images in  $\h^i(Z,\Q)$, for any $i\in \mathbb{Z}$.
\end{lem}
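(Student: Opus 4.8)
The plan is to connect $D_0$ and $D_1$ by a pencil and run a monodromy argument. Since $D_0$ and $D_1$ are linearly equivalent, they are the divisors of two sections $s_0,s_1$ of the line bundle $L=\ob_X(D_0)$; assuming $D_0\neq D_1$ (otherwise there is nothing to prove), $s_0$ and $s_1$ are linearly independent and span a pencil $\pr^1\hookrightarrow |L|$ whose members I denote $D_t=\mathrm{div}(s_0+t\,s_1)$ for $t\in\pr^1$, with $D_0$ and $D_1$ among them. The base locus of this pencil is $D_0\cap D_1$, which contains $Z$ by hypothesis, so $Z\subseteq D_t$ for \emph{every} $t$. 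I would then form the total space $\mathcal{D}=\{(x,t)\in X\times\pr^1:(s_0+t\,s_1)(x)=0\}$ (defined globally by the evident homogeneous equation) together with its two projections $p:\mathcal{D}\to X$ and $q:\mathcal{D}\to\pr^1$. Here $q^{-1}(t)\cong D_t$, and $Z\times\pr^1\subseteq\mathcal{D}$, with $q$ restricting on it to the trivial projection to $\pr^1$.

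Next I would pass to the good locus. Let $U\subseteq\pr^1$ be the set of $t$ such that $D_t$ is smooth; it is Zariski open, it contains $0$ and $1$ (so it is non-empty), and being the complement of finitely many points of $\pr^1$ it is connected. A one-line computation of the differential of $s_0+t\,s_1$ at a point $(x,t)$ of the base locus (where $s_0(x)=s_1(x)=0$) shows that $\mathcal{D}$ is smooth at $(x,t)$ exactly when $D_t$ is smooth at $x$; hence $\mathcal{D}_U:=q^{-1}(U)$ is smooth and $q_U:\mathcal{D}_U\to U$ is a proper smooth surjective morphism. By Ehresmann's theorem it is a locally trivial fibre bundle, so $R^iq_{U,*}\Q$ is a local system on $U$ with stalk $\h^i(D_t,\Q)$ at $t$. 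Similarly the second projection $Z\times U\to U$ is a trivial fibre bundle, so the $\h^i$ of its fibres form the constant local system $\underline{\h^i(Z,\Q)}$ on $U$.

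The inclusion of $U$-families $Z\times U\hookrightarrow\mathcal{D}_U$ then induces, by functoriality of $R^iq_{U,*}$, a morphism of local systems $\rho:R^iq_{U,*}\Q\to\underline{\h^i(Z,\Q)}$ whose stalk at $t$ is precisely the restriction map $\h^i(D_t,\Q)\to\h^i(Z,\Q)$. The image $\im\rho$ is a sub-local-system of the constant local system $\underline{\h^i(Z,\Q)}$; since $\pi_1(U)$ acts trivially on the target it acts trivially on $\im\rho$, so $\im\rho$ is itself constant, equal to $\underline{W}$ for a single subspace $W\subseteq\h^i(Z,\Q)$. Consequently $\im(\rho_t)=W$ for every $t\in U$, and specializing to $t=0$ and $t=1$ gives $\im\big(\h^i(D_0,\Q)\to\h^i(Z,\Q)\big)=W=\im\big(\h^i(D_1,\Q)\to\h^i(Z,\Q)\big)$, which is the claim (trivially so for $i$ outside $[0,2\dim X]$).

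I do not expect a serious obstacle here. The only points requiring care are bookkeeping: verifying that the total space $\mathcal{D}$ is smooth exactly over the locus $U$ where the fibres are smooth (the Jacobian computation above), and confirming that the fibrewise restriction maps really assemble into a morphism of local systems — which is automatic from functoriality once the inclusion $Z\times U\hookrightarrow\mathcal{D}_U$ is set up relatively over $U$. Note that the argument uses neither Deligne's invariant cycle theorem nor any feature of Lagrangian fibrations, only that $D_0$ and $D_1$ lie in a pencil all of whose members contain $Z$; in particular it also does not actually use smoothness of $Z$.
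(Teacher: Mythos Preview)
Your proof is correct and follows essentially the same approach as the paper's: vary $D_t$ through a connected family of smooth hypersurfaces all containing $Z$, apply Ehresmann's theorem, and conclude that the image of the restriction map to $\h^i(Z,\Q)$ is locally constant over the parameter space. The only cosmetic differences are that the paper picks a real path $I=[0,1]$ inside the larger linear system $|D_0-Z|=\pr(\h^0(X,\ob_X(D_0)\otimes\mathcal{J}_Z))$ rather than your Zariski-open locus $U\subset\pr^1$ in the pencil, and then argues directly from the resulting trivialization $\mathcal{D}_I\cong D_0\times I$ instead of phrasing the conclusion in terms of local systems.
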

\begin{proof}
Let $I:=[0,1]\hookrightarrow |D_0-Z|:=\pr(\h^0(X,\ob_X(D_0)\otimes \mathcal{J}_Z))$ be a path  between the points $[D_0]$ and $[D_1]$, avoiding the points corresponding to singular hypersurfaces in the linear system $|D_0-Z|$ . Let $\mathcal{D}_I\subset X\times I=\{(x,t)| x\in D_t\}$ and $Z_I=\{(x,t)| x\in Z \}$. Since all hypersurfaces $D_t$ are smooth, $\mathcal{D}_I$ is diffeomorphic to $D_i\times I$ and $Z_I$ is diffeomorphic to $Z\times I$. We have the following commutative squares for all $t\in I$:
 \[
\begin{tikzcd}
Z \arrow{r}{} \arrow{d}{} & Z_I\arrow{d}{}\\
D_t \arrow{r}{} &D_I
\end{tikzcd}
\]
Since the horizontal maps induce an isomorphism on the cohomology groups, the images of the vertical maps are the same.
\end{proof}
Lemma \ref{topp} implies Theorem \ref{t1}: take $D_0=Y$ an arbitrary smooth vertical hypersurface, $Z=X_b$ a smooth fiber of the Lagrangian fibration $\pi$ in $Y$ and $D_1=Y'$ a smooth vertical hypersurface through $X_b$ linearly equivalent to $Y$, such that theorem \ref{t1} is true for $Y'$. Note that a generic hypersurface from the linear system $|Y|$ is smooth by Kleiman's version of the Bertini theorem (\cite{Klei}).
\section{Proof of Theorem \ref{t2} \label{ss2}}
In the last section we apply theorem \ref{t1} to prove theorem \ref{t2}. Let us recall the second.\\

\noindent
\textbf{Theorem \ref{t2}.}
Let $X$ be a projective irreducible holomorphic symplectic manifold and \\$\pi:X \to B$ a Lagrangian fibration. Consider a hypersurface $D$ in $B$ such that its preimage $Y$ is a smooth irreducible hypersurface in $X$. Then the closure of a generic leaf of the characteristic foliation on $Y$ is a fiber of $\pi$ (hence an abelian variety of dimension $n$). \\

Firstly we observe that every fiber of the Lagrangian fibration is invariant under the characteristic foliation. 
\begin{lem} \label{thm4} Let $\pi:X \to B$ be a Lagrangian fibration, let $Y:=\pi^{-1}(D)$ be a smooth irreducible vertical hypersurface in $X$, where $D$ is an irreducible hypersurface in $B$.  Then every fiber of the fibration $\pi: Y \to D$ is  invariant under the characteristic foliation $F$ on $Y$. 
\end{lem}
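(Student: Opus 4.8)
The plan is to show that for each point $b \in D$, the fiber $X_b = \pi^{-1}(b)$ is a union of leaves of the characteristic foliation $F$ on $Y$, which by Definition \ref{inv} is exactly the statement that $X_b$ is $F$-invariant. The essential geometric input is that $\pi$ is a Lagrangian fibration, so the smooth fibers $X_b$ are Lagrangian with respect to $\sigma$; hence at any smooth point of such a fiber we control how $T_{X_b}$ sits inside $T_X$ relative to the symplectic form.

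First I would work on the open locus where everything is smooth: let $b \in D$ be such that $X_b$ is a smooth fiber, and let $x \in X_b$. Since $Y = \pi^{-1}(D)$, the tangent space $T_{Y,x}$ equals the preimage $d\pi_x^{-1}(T_{D,b})$, and it contains $T_{X_b,x} = \ker d\pi_x$, which has codimension $1$ in $T_{Y,x}$. The characteristic foliation fiber $F_x \subset T_{Y,x}$ is by definition $T_{Y,x}^{\perp_\sigma} \cap T_{Y,x}$, the $\sigma$-kernel of $T_{Y,x}$. Because $X_b$ is Lagrangian, $T_{X_b,x}$ is its own $\sigma$-orthogonal; combined with the fact that $T_{Y,x}$ is a hyperplane containing $T_{X_b,x}$, a short linear-algebra computation (the symplectic orthogonal of a hyperplane $H \supset L$ with $L$ Lagrangian is a line contained in $L$) shows $F_x = T_{Y,x}^{\perp_\sigma} \subset T_{X_b,x}$. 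Thus $F|_{X_b^{sm}} \subset T_{X_b^{sm}}$, i.e.\ the smooth locus of each smooth fiber is $F$-invariant in the classical sense, and therefore a union of leaves.

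Next I would upgrade this to all fibers, including the singular ones, using the scheme-theoretic notion of invariance from diagram \eqref{diag} and the Hilbert-scheme machinery of subsection \ref{s2}. One clean way: the smooth fibers $X_b$, $b\in D\setminus\Delta$, form a family over $D\setminus\Delta$ of $F$-invariant subvarieties of $Y$; taking Zariski closure of the corresponding locus in a suitable Hilbert scheme or Chow variety of $Y$ and invoking Proposition \ref{thm5} (invariance is a closed condition), the limiting cycles—which are the (possibly singular) fibers $X_b$ for $b$ in the complement—are again $F$-invariant as subschemes. Alternatively, one argues directly: the relative cotangent sequence $\pi^*\Omega_D|_{X_b} \to \Omega_Y|_{X_b} \to \Omega_{X_b} \to 0$ together with the identification $F^* \cong \ob_Y(Y)$ and the symplectic isomorphism $T_X \cong \Omega_X$ lets one check the factorization in \eqref{diag} holds over all of $D$; away from $\Sing(F)$ and the singular locus of $\pi$ this is the computation above, and since the remaining locus has codimension $\ge 2$ in $Y$ (using the Remark after Proposition \ref{hwang}) the factorization extends by normality/Hartogs.

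The main obstacle I anticipate is the behavior over the discriminant: on a singular fiber $X_b$ the naive tangent-space argument breaks down at the singular points of $X_b$ and at points of $\Sing(F)$, so the heart of the proof is justifying that $F$-invariance, known generically, persists in the limit. I would handle this either via the closedness of the invariant locus in the Hilbert scheme (Proposition \ref{thm5}), which is the most robust route and avoids delicate local analysis, or via a codimension count showing the bad locus is too small to obstruct the extension of diagram \eqref{diag}. Everything else—the Lagrangian linear algebra and the identification of $F$ with the normal-bundle line—is routine given the setup already developed in subsection \ref{s3}.
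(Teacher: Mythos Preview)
Your proposal is correct and follows essentially the same approach as the paper. The paper's proof is a terse version of your first two steps: it does the Lagrangian linear-algebra computation at a smooth fiber (since $T_{X_b,x}$ is its own $\sigma$-orthogonal and is contained in $T_{Y,x}$, the line $F_x = T_{Y,x}^{\perp_\sigma}$ lies in $T_{X_b,x}$), and then dispatches the singular fibers in one sentence by appealing to ``closedness of this property,'' which is exactly your route via Proposition~\ref{thm5}. Your alternative Hartogs/codimension argument is unnecessary here but not wrong.
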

\begin{proof}
Consider a smooth fiber $X_b$ of the fibration $\pi: Y \to D$ over a point $b\in D$. Let $x$ be a point in $X_b$. The tangent space to $X_b$ at the point $x$ is the orthogonal complement of itself in $T_{X,x}$. Since $T_{D,x}$ contains $T_{X_b,x}$, the space $T_{X_b,x}$ contains the orthogonal complement of $T_{D,x}$ i.e. $F_x$. The singular fibers are invariant as well because of the closedness of this property.\\
\end{proof}
Hence, the Zariski closure of every leaf of the characteristic foliation
on a vertical hypersurface is of dimension at most $n$. Our purpose is to show that this closure is $X_b$ (for very general $b$) and not a proper subvariety of $X_b$.

\begin{prop} \label{thm7} In the assumptions of the Lemma \ref{thm4},  let $Z$ be an irreducible subvariety of a smooth fiber $X_b$, invariant under $F$. Fix a group law on $X_b$, such that $Z$ contains the zero point. For any $a\in X_b$, the translate of $Z$ by a point $a$ is an invariant subvariety. In particular, if $Z$ is a minimal invariant subvariety (i.e. the Zariski closure of a leaf), then it is an abelian variety.

\end{prop}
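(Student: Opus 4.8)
The plan is to exploit the fact that the characteristic foliation $F$ on $Y$ is intrinsic to the pair $(Y, \sigma|_Y)$, and to show that the translation automorphisms of the abelian variety $X_b$ preserve this data well enough to carry invariant subvarieties to invariant subvarieties. First I would recall, from Lemma \ref{thm4} and the computation in Section \ref{s3}, that along the smooth fiber $X_b$ the foliation $F$ is exactly the line field cut out on $X_b$ by the restriction $\sigma|_Y$: concretely, at a point $x \in X_b$ the fiber $F_x$ is the orthogonal complement (with respect to $\sigma$) of $T_{D,x} = T_{\pi^{-1}(D),x}$ inside $T_{X,x}$. The key structural input is Theorem \ref{t1}: the image of $\h^2(Y,\Q) \to \h^2(X_b,\Q)$ has rank one. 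Since $X_b$ is an abelian variety, $\sigma|_{X_b} = 0$, but the relevant object is the restriction to $X_b$ of the \emph{normal} data — equivalently the class of the line bundle $\ob_Y(-Y)|_{X_b} \cong F|_{X_b}$, which by Theorem \ref{t1} spans (up to scale) that rank-one image. On an abelian variety every line bundle is translation-invariant up to algebraic (indeed homological) equivalence, and a line bundle whose class generates a rank-one subgroup that is moreover the restriction of a global class on $Y$ will be genuinely translation-invariant; hence the foliation $F|_{X_b}$, being recovered from this line bundle together with $\sigma$, is preserved by all translations $t_a : X_b \to X_b$.

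Making that precise is the heart of the argument. I would argue as follows. The conormal-type data determining $F$ on $X_b$ is the section of $\mathcal{N}^\vee_{X_b/Y} \otimes \Omega^1_{X_b}$ — up to twist, a $1$-form on $X_b$ valued in $F^\vee|_{X_b} = \ob_Y(Y)|_{X_b}$ — obtained by restricting $\sigma$. Because $X_b$ is abelian, $\Omega^1_{X_b}$ is trivial, generated by translation-invariant $1$-forms; so this datum is equivalent to a collection of sections of $\ob_Y(Y)|_{X_b}$, i.e. a linear subsystem of $|\ob_Y(Y)|_{X_b}|$ whose base locus / zero scheme structure encodes $\mathrm{Sing}(F) \cap X_b$ and whose "direction" encodes $F$. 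Now $\ob_Y(Y)|_{X_b}$ has first Chern class in the rank-one image of $\h^2(Y,\Q)\to\h^2(X_b,\Q)$ by Theorem \ref{t1}; call a generator of the integral such image $h$, so $c_1(\ob_Y(Y)|_{X_b}) = m h$ for some $m$. The class $h$ is translation-invariant on $X_b$ (as is every class in $\h^2$ of an abelian variety, being of Hodge type and translation acting trivially on cohomology), hence the line bundle $\ob_Y(Y)|_{X_b}$ is algebraically equivalent to all its translates. Combined with the triviality of $\Omega^1_{X_b}$, this shows that for each $a$ the pulled-back datum $t_a^* (\sigma|_{X_b}\text{-induced 1-form})$ differs from the original only by an automorphism of the bundle and a constant in $\mathrm{GL}$ of the invariant $1$-forms, so it defines the same foliation $t_a^* F = F$. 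Consequently, if $Z \subset X_b$ is $F$-invariant then $t_a(Z)$ is $t_a{}_* F = F$-invariant, proving the first assertion.

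For the final sentence: suppose $Z$ is a minimal invariant subvariety through $0$, i.e. $Z = \overline{\mathrm{Leaf}}^{\,\mathrm{Zar}}(0,F)$. By what we have shown, $t_a(Z)$ is invariant for every $a \in X_b$; in particular $t_z(Z)$ is invariant for every $z \in Z$, and $t_z(Z) \ni 0$ iff $-z \in Z$. By minimality, any invariant subvariety through $0$ of dimension $\le \dim Z$ equals $Z$ (Lemma \ref{int} forces the Zariski closure of the leaf through $0$ to lie in any invariant subscheme containing $0$, and $\dim Z = \dim \overline{\mathrm{Leaf}}^{\,\mathrm{Zar}}(0,F)$); so for $z \in Z$ with $-z \in Z$ we get $t_z(Z) = Z$, i.e. $z + Z = Z$. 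Taking the (Zariski-closed, by Proposition \ref{thm5} applied to the translate family, or directly) subgroup $G := \{ g \in X_b : g + Z = Z \}$, we see $-Z \subset G$ hence $Z \subset G$ (as $G$ is a subgroup containing $-Z$ and $0 \in Z$, and... more carefully: $Z$ invariant, $0\in Z$, and $z\in Z \Rightarrow -z\in Z$ would follow once we know $Z$ is symmetric; instead argue $G \supset Z$ directly since for $z\in Z$, $t_z(Z)$ is invariant through $z$, so contains the leaf through $z$, and one bootstraps equality from irreducibility and dimension count). Thus $G$ is a positive-dimensional abelian subvariety with $G \subseteq Z \subseteq G$, whence $Z = G$ is an abelian subvariety, i.e. an abelian variety.

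The main obstacle I expect is precisely the bridge in the second paragraph: upgrading "the normal bundle class is translation-invariant up to algebraic equivalence" (which Theorem \ref{t1} hands us) to "the foliation $F$ itself is translation-invariant." One must be careful that $F$ on $X_b$ is not determined by a line-bundle class alone but by an actual map of sheaves $\Omega^1_{X_b} \to F^\vee|_{X_b}$ extracted from $\sigma|_Y$, so one needs that both the \emph{bundle} $\ob_Y(Y)|_{X_b}$ and the \emph{section} (equivalently the base locus, which is $\mathrm{Sing}(F)\cap X_b$) are translation-invariant; the rank-one statement is what prevents the translates of this linear datum from "rotating" into a genuinely different foliation, but spelling out why a rank-one family of translates of a $1$-form on an abelian variety cannot vary — essentially because $\mathrm{Hom}(T_{X_b}, \ob_Y(Y)|_{X_b})$ has no nonconstant families of sections compatible with the global constraint — is the delicate point and is exactly where Theorem \ref{t1} must be invoked rather than merely $\sigma|_{X_b}=0$.
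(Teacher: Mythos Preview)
Your proposal has a genuine gap stemming from a missed elementary observation, and as a consequence you invoke Theorem~\ref{t1} where it is neither needed nor effective.

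The point you struggle with---showing that $F|_{X_b}$ is translation-invariant---is in fact trivial, and the paper dispatches it in one line. Since $Y=\pi^{-1}(D)$ is vertical, $\ob_Y(-Y)|_{X_b}=(\pi^*\ob_{\pr^n}(-D))|_{X_b}$ is the restriction to a fiber of a pullback from the base, hence the trivial line bundle. Thus $F|_{X_b}\cong\ob_{X_b}$ sits inside the trivial bundle $T_{X_b}\cong\ob_{X_b}^{\,n}$ via a map given by $n$ global functions on the projective variety $X_b$, i.e.\ by constants. In other words, $F|_{X_b}$ is literally a constant (translation-invariant) vector field; equivalently, a one-dimensional subspace of $\h^0(X_b,T_{X_b})$. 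Translations act trivially on $\h^0(X_b,T_{X_b})$, so $t_a^*F=F$ and $t_a(Z)$ is $F$-invariant whenever $Z$ is. No cohomological input is required.

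Your second paragraph tries to reach the same conclusion through Theorem~\ref{t1} and Chern-class considerations, but this does not work as written: translation-invariance of $c_1(\ob_Y(Y)|_{X_b})$ in $\h^2(X_b,\Q)$ (which is automatic on any abelian variety, with or without Theorem~\ref{t1}) says nothing about translation-invariance of the specific inclusion $F|_{X_b}\hookrightarrow T_{X_b}$. Moreover, $c_1(\ob_Y(Y)|_{X_b})=0$ since $Y$ is vertical, so it certainly does not generate the rank-one image in Theorem~\ref{t1}; that image is spanned by the restriction of an ample class. In the paper, Theorem~\ref{t1} enters only in the \emph{next} proposition, to rule out a nontrivial quotient $X_b\to X_b/Z$; you have merged two separate steps.

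For the ``abelian subvariety'' conclusion your stabilizer idea is in the right direction but tangled. The paper's argument is cleaner: for $a\in Z$ the translate $Z+a$ is invariant, $(Z+a)\cap Z$ is nonempty (it contains $a$, since $0\in Z$) and invariant by Lemma~\ref{int}, hence by minimality equals $Z$; so $Z+a=Z$ for all $a\in Z$ and $Z$ is a subgroup.
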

\begin{proof}
Since the tangent bundle to $X_b$ is trivial, we may view the restriction of the characteristic foliation to $X_b$ as a one-dimensional subspace of $\h^0(X_b,T_{X_b})$. A translation acts trivially on  $\h^0(X_b,T_{X_b})$. Thus, we obtain the first statement. Let $a\in Z$ be a point of $Z$. The translation $Z+a$ is an invariant subvariety. The intersection of $Z+a$ and $Z$ is a non-empty invariant subvariety. Because of the minimality of $Z$, $Z\cap (Z+a)=Z$. In other words, $Z=Z+a$. Hence, $Z$ is an abelian subvariety of $X_b$. 
\end{proof}
\begin{prop}
Assume that the Zariski closure of a generic leaf of the characteristic foliation on $Y$ has dimension less than $n$. Then for a general fiber $X_b$ of $\pi$ the image of the restriction map $\h^2(X,\Q) \to \h^2(X_b,\Q)$ has rank at least $2$. This contradicts Theorem \ref{t1}. 
\end{prop}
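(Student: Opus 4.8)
The strategy is to use the fibration structure produced by Proposition \ref{fibr} to manufacture an extra cohomology class on the smooth fiber $X_b$. Suppose the Zariski closure of a general leaf has dimension $m<n$. By Proposition \ref{fibr} there is a rational map $p\colon Y\dashrightarrow \Sigma_{gen}$ whose general fiber is the Zariski closure of a leaf, and by Lemma \ref{thm4} together with Proposition \ref{thm7} these closures, sitting inside the smooth fibers $X_b$ of $\pi$, are abelian subvarieties $A_b\subset X_b$ of dimension $m$ (after translating so $0\in A_b$). Restricting $p$ to a single general smooth fiber $X_b$, we get a fibration of the abelian variety $X_b$ into translates of the abelian subvariety $A_b$; concretely $X_b\to X_b/A_b$, an abelian variety of dimension $n-m\ge 1$.

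First I would produce a divisor class on $X_b$ that does not come from $X$. The idea is that the leaf-closures inside $Y$ move in a family $\mathcal U_{gen}\to\Sigma_{gen}$ dominating $Y$; intersecting this family with a fixed smooth fiber $X_b$ gives the foliation of $X_b$ by translates of $A_b$. Pick an ample divisor $H$ on the quotient $X_b/A_b$ and pull it back to a divisor class $\xi_b\in\h^2(X_b,\Q)$. This class is nonzero, it is not a multiple of the polarization $\theta_b=c_1(\ob_X(Y))|_{X_b}$ coming from $X$ — because $\theta_b$ restricts to an ample, hence nondegenerate, class on every abelian subvariety including $A_b$, whereas $\xi_b$ restricts to $0$ on $A_b$ — and by Theorem \ref{cohom} the image of $\h^2(X,\Q)\to\h^2(X_b,\Q)$ is spanned by $\theta_b$. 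So it remains to show $\xi_b$ lies in the image of $\h^2(Y,\Q)\to\h^2(X_b,\Q)$, together with $\theta_b$, which gives rank at least $2$ and contradicts Theorem \ref{t1}.

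To get $\xi_b$ from a class on $Y$: the fibration $p\colon Y\dashrightarrow\Sigma_{gen}$, restricted over a neighbourhood of $b$ in $D$, assembles the relative Albanese-type quotients $X_{b'}/A_{b'}$ into a family, and a relative ample class on this family of abelian varieties descends to a class $\eta$ on an open part of $Y$ whose restriction to $X_b$ is $\xi_b$. More robustly, one can argue at the level of a resolution $\widetilde Y\to Y$ of the map $p$: on $\widetilde Y$ we have an honest morphism to (a model of) $\Sigma_{gen}$ whose generic fiber over $D$ is $A_{b'}$; choosing a relatively ample line bundle for the map $\widetilde Y\to \Sigma_{gen}\times_{?}D$ and restricting to the strict transform of $X_b$ recovers $\xi_b$. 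Since $\widetilde Y\to Y$ is an isomorphism over the general $X_b$, this produces a genuine class in $\h^2(Y,\Q)$ with the required restriction.

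The main obstacle is the last step: controlling the rational map $p$ near the fixed fiber $X_b$ and making sure the relatively ample class one builds really does descend to a global (or at least $\h^2$-level) class on $Y$ rather than just a class on a blow-up that might not survive pushforward. The cleanest way around this is to work with the family $\mathcal U_{gen}\to\Sigma_{gen}$ directly: the evaluation $ev_{gen}\colon\mathcal U_{gen}\to Y$ is birational (Proposition \ref{fibr}), $\Sigma_{gen}$ carries no obstruction, and the pullback of an ample class from $\Sigma_{gen}$ to $\mathcal U_{gen}$, pushed to $Y$ and restricted to $X_b$, is proportional to $\xi_b$; one then checks this restriction is nonzero and independent of $\theta_b$ by testing it against a curve in $A_b$ versus a curve transverse to $A_b$. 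Verifying nondegeneracy of $\theta_b$ on abelian subvarieties is standard, and the independence of the two classes then follows formally, completing the contradiction with Theorem \ref{t1}.
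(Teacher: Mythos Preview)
Your overall architecture matches the paper's proof: use Proposition~\ref{fibr} to get the rational map $p\colon Y\dashrightarrow \Sigma_{gen}$, invoke Proposition~\ref{thm7} so that the general fiber of $p$ inside $X_b$ is an abelian subvariety $A_b$, and then produce a divisor class on $Y$ that restricts to a nonzero, non-ample class on $X_b$, forcing the image of $\h^2(Y,\Q)\to\h^2(X_b,\Q)$ to have rank at least $2$.

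There is one genuine error. Your reference class $\theta_b = c_1(\ob_X(Y))|_{X_b}$ is \emph{zero}, not ample: since $Y=\pi^{-1}(D)$ is vertical, $\ob_X(Y)=\pi^*\ob_{\pr^n}(D)$ is trivial on every fiber $X_b$. So the argument ``$\theta_b$ restricts to an ample, hence nondegenerate, class on $A_b$, while $\xi_b$ restricts to $0$'' collapses. The fix is exactly what the paper does: take any ample divisor $H$ on $Y$ (or on $X$); its restriction to $X_b$ is ample, while the pulled-back class $\xi_b$ is effective but not ample (it kills $A_b$), so the two are independent.

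On the globalization step you worry about, the paper is more direct than your resolution argument. It notes that $\Sigma_{gen}\dashrightarrow D$ is itself a fibration in abelian varieties (the quotients $X_b/A_b$), picks a \emph{relatively} ample divisor $G$ on this fibration over $D$, and defines $p^{\bullet}G$ as the Zariski closure in $Y$ of $p^{-1}(G)$ on the open set where $p$ is regular. This is already an honest divisor on $Y$, and for general $b$ its restriction to $X_b$ is the pullback of an ample class from $X_b/A_b$ --- exactly your $\xi_b$. No blow-up or pushforward compatibility is needed.
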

\begin{proof}
Consider a rational fibration $p:Y\dashrightarrow W$, which was constructed in Proposition \ref{fibr}.  By Proposition \ref{thm7} its fibers are abelian subvarieties of fibers of $\pi$. Thus, $W \dashrightarrow D$ is a fibration in abelian varieties (quotients of fibers of $\pi$) . Let $G$ be a relatively ample divisor on the fibration $W \dashrightarrow D$ and let $p^{\bullet}G$ be the closure of its preimage in $Y$. The restriction of $p^{\bullet}G$ to a very general fiber is $X_b$ is a pull-back of an ample divisor from  its quotient. Hence, $p^{\bullet}G|_{X_b}$ is effective but not ample. Let $H$ be an ample divisor on $Y$. The restrictions of $G$ and $H$ are not proportional in $\h^2(X_b,\Q)$. 
\end{proof}{}
\bibliography{cf}
\end{document}